\newtheorem{theoremcounter}{Theorem Counter}[section]
\theoremstyle{definition}
\newtheorem{definition}[theoremcounter]{Definition}
\newtheorem{example}[theoremcounter]{Example}
\theoremstyle{plain}
\newtheorem{lemma}[theoremcounter]{Lemma}
\newtheorem{proposition}[theoremcounter]{Proposition}
\newtheorem{theorem}[theoremcounter]{Theorem}
\numberwithin{equation}{section}
\newcommand{\Z}{\mathbb{Z}}
\begin{document}

\title[]{How long can $k$-G\"{o}bel sequences remain integers?} 

\author{Rinnosuke Matsuhira}
\address{Department of Mathematics, Kyushu University,
Motooka 744, Nishi-ku, Fukuoka 819-0395, Japan}
\email{matsuhira.rinnosuke.245@s.kyushu-u.ac.jp} 

\author{Toshiki Matsusaka}
\address{Faculty of Mathematics, Kyushu University,
Motooka 744, Nishi-ku, Fukuoka 819-0395, Japan}
\email{matsusaka@math.kyushu-u.ac.jp} 

\author{Koki Tsuchida}
\address{Department of Mathematics, Kyushu University,
Motooka 744, Nishi-ku, Fukuoka 819-0395, Japan}
\email{tsuchida.koki.481@s.kyushu-u.ac.jp} 

\subjclass[2020]{Primary 11B37; Secondary 11B50}



\maketitle

\begin{abstract}
	Inspired by Episode 3 of the Japanese manga ``Seisu-tan" by Doom Kobayashi and Shin-ichiro Seki, we investigate the $k$-G\"{o}bel sequence $(g_{k,n})_n$ named after Fritz G\"{o}bel. Although the sequence is generally defined as rational, quite a few initial terms behave like an integer sequence. This article addresses a question raised in Seisu-tan and shows that $g_{k,n}$ is always an integer for any $k \geq 2$ and $0 \leq n \leq 18$.
\end{abstract}


\section{Introduction}

As Richard Guy~\cite{Guy1988} pointed out, it can be challenging to guess the underlying pattern of a sequence from just a few examples. Fritz G\"{o}bel\footnote{Although Guy's writings do not provide any information about G\"{o}bel's identity, Neil Sloane has informed the authors that G\"{o}bel (Enschede, The Netherlands) is the author of the article~\cite{Gobel1970}.} investigated a remarkable sequence $(g_n)_n$ defined by the recursion
\[
	g_n = \frac{1 + g_0^2 + g_1^2 + \dots + g_{n-1}^2}{n}
\]
with the initial value $g_0 = 1$, (see~\cite[E15]{Guy2004}). 
Although the sequence $(g_n)_n$ starting as 1, 2, 3, 5, 10, 28, 154, 3520, 1551880, 267593772160, $\dots$, seems to be an integer sequence, Hendrik Lenstra~\cite{Guy1981} made the discovery that $g_{43} \approx 5.4 \times 10^{178485291567}$ is not an integer. 
The history can be found in the letter from Lenstra to Neil Sloane dated May 13, 1975, available in~\cite[A003504]{OEIS}. Here is an excerpt from the letter.

\begin{quote}
Dear Dr. Sloane,

\noindent Thank you very much for sending me the reprint + the first supplement.

\noindent The sequence from my letter of April 14:
\begin{align*}
	a_1 &= 1\\
	a_{n+1} &= \frac{1+a_1^2+a_2^2 + \cdots + a_n^2}{n}
\end{align*}
was mentioned to me by F. G\"{o}bel, when he saw my copy of your book. I was able to explain its absence by proving
\[
	a_n \in \mathbb{Z} \Longleftrightarrow n \leq 43.
\]
$\dots$

\noindent With kindest regards,

\noindent H. W. Lenstra, Jr.
\end{quote}

Additionally, David Boyd, Alfred van der Poorten~\cite[E15]{Guy1981}, and Henry Ibstedt~\cite{Ibstedt1990} examined the sequence obtained by replacing the squares with cubes in the above definition of $g_n$. They showed that it remains an integer sequence until the 88th term, but its integrality property breaks at the 89th. This observation led to a more general concept of $k$-G\"{o}bel sequences.

\begin{definition}
	For an integer $k \geq 2$, the \emph{$k$-G\"{o}bel sequence} $(g_{k,n})_n$ is defined by $g_{k,0} = 1$ and
	\[
		g_{k,n} = \frac{1}{n} \left(1 + \sum_{j=0}^{n-1} g_{k,j}^k \right)
	\]
	for $n \geq 1$.
\end{definition}

Following Lenstra and Ibstedt's considerations, we investigate how long $k$-G\"{o}bel sequences remain integers. For each integer $k \geq 2$, we put $N_k = \inf \{n \in \mathbb{Z}_{\geq 0} : g_{k,n} \not\in \mathbb{Z}\}$. If $g_{k,n}$ is always an integer for any non-negative integer $n$, then $N_k = \infty$. The first several terms are calculated as shown in the table below.
\begin{table}[H]
\centering
\begin{tabular}{c|cccccccccc}
	\hline
	$k$ & 2 & 3 & 4 & 5 & 6 & 7 & 8 & 9 & 10 & 11  \\
	$N_k$ & 43 & 89 & 97 & 214 & 19 & 239 & 37 & 79 & 83 & 239\\
	\hline
	$k$  & 12 & 13 & 14 & 15 & 16 & 17 & 18 & 19 & 20 & 21 \\
	$N_k$ & 31 & 431 & 19 & 79 & 23 & 827 & 43 & 173 & 31 & 103\\
	\hline
	$k$  & 22 & 23 & 24 & 25 & 26 & 27 & 28 & 29 & 30 & 31 \\
	$N_k$ & 94 & 73 & 19 & 243 & 141 & 101 & 53 & 811 & 47 & 1077\\	
	\hline
	$k$  & 32 & 33 & 34 & 35 & 36 & 37 & 38 & 39 & 40 & 41 \\
	$N_k$ & 19 & 251 & 29 & 311 & 134 & 71 & 23 & 86 & 43 & 47\\	
	\hline
	$k$  & 42 & 43 & 44 & 45 & 46 & 47 & 48 & 49 & 50 & 51 \\
	$N_k$ & 19 & 419 & 31 & 191 & 83 & 337 & 59 & 1559 & 19 & 127\\	
	\hline
	$k$  & 52 & 53 & 54 & 55 & 56 & 57 & 58 & 59 & 60 & 61 \\
	$N_k$ & 109 & 163 & 67 & 353 & 83 & 191 & 83 & 107 & 19 & 503\\	
	\hline
\end{tabular}
\caption{OEIS~\cite[A108394]{OEIS}}
\label{Nk-list}
\end{table}

In Stewart's book~\cite[Life, Recursion and Everything]{Stewart2010}, the first few terms of $N_k$ were introduced and described as follows: ``As far as I know, no one really understands why these sequences behave like they do". More recently, in the Japanese manga ``Seisu-tan"\footnote{The title ``Seisu-tan" has a double meaning in Japanese. One is ``the tale (tan) of integers (seisu)". The other involves using the term ``tan" (commonly employed as a suffix in anime and manga character names) to anthropomorphize integers.}~\cite[Episode 3]{KobayashiSeki2023}, the $k$-G\"{o}bel sequence is addressed, and the following question is posed: ``What is the minimum value of $N_k$?". In this article, we answer the question.

\begin{figure}[H]
 \centering
 \includegraphics[keepaspectratio,scale=0.25]{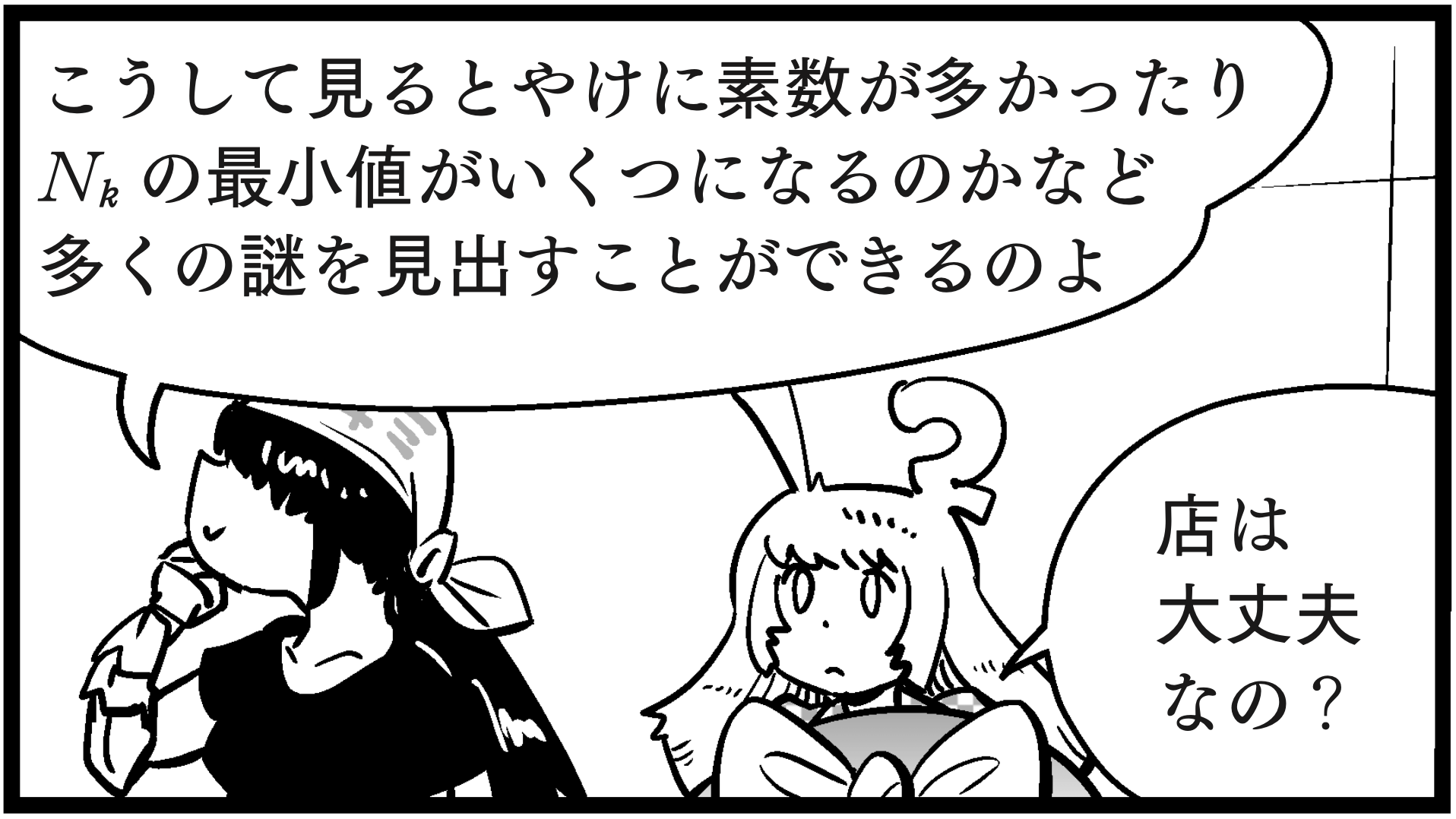}
 \caption{The excerpt from Seisu-tan~\cite[Episode 3]{KobayashiSeki2023}. They observed the list of $N_k$ and asked the above question.}
 \label{Figure}
\end{figure}

\begin{theorem}\label{main}
	We have $\min_{k \geq 2} N_k = 19$, which implies that $g_{k,n} \in \mathbb{Z}$ for any $k \geq 2$ and $0 \leq n \leq 18$. Moreover, we have $N_k = 19$ if and only if $k \equiv 6,14 \pmod{18}$.
\end{theorem}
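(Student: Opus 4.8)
The strategy is to reduce both assertions to finite computations, exploiting the fact that for $n\le 19$ the relevant arithmetic of $g_{k,n}$ depends on $k$ only through a bounded residue class. Put $s_{k,n}=1+\sum_{j=0}^{n-1}g_{k,j}^{k}$, so that $g_{k,n}=s_{k,n}/n$ and, for $n\ge 1$, $s_{k,n+1}=s_{k,n}+g_{k,n}^{k}=n\,g_{k,n}+g_{k,n}^{k}$. The first step is the elementary observation that if $g_{k,0},\dots,g_{k,n}\in\Z$ then, since $n\equiv-1\pmod{n+1}$,
\[
	g_{k,n+1}\in\Z\iff g_{k,n}^{k}\equiv g_{k,n}\pmod{n+1},\qquad\text{and}\qquad g_{k,n+1}=\frac{g_{k,n}^{k}-g_{k,n}}{n+1}+g_{k,n}.
\]
So ``$g_{k,n}\in\Z$ for all $0\le n\le 18$'' becomes the inductive check that $p^{a}\mid g_{k,n}^{k}-g_{k,n}$ for each prime power $p^{a}\parallel(n+1)$ with $n\le 17$, and the displayed identity lets a known congruence for $g_{k,n}$ modulo a prime power be carried over to $g_{k,n+1}$.

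For the finiteness, fix a prime $p\le 17$ and run the induction while tracking $g_{k,j}\bmod p^{c(p)}$ for $j\le 18$, where $c(p)$ is taken large enough to absorb every division by a power of $p$ that occurs below index $18$; tracing the recursion bounds $c(p)$ explicitly in terms of the $p$-adic valuations of $1,\dots,18$ (so $c(p)=1$ for $p\in\{11,13,17\}$, while $c(2),c(3),c(5),c(7)$ are small explicit integers). For $k$ beyond a small threshold one has $g_{k,j}^{k}\equiv 0\pmod{p^{c(p)}}$ whenever $p\mid g_{k,j}$, and otherwise $g_{k,j}^{k}\bmod p^{c(p)}$ is determined by $g_{k,j}\bmod p^{c(p)}$ together with $k\bmod\lambda(p^{c(p)})$ (Euler's theorem, refined by Carmichael's function $\lambda$); an induction then shows that for such $k$ the residues $g_{k,j}\bmod p^{c(p)}$ with $j\le 18$, and in particular the truth of ``$g_{k,n}\in\Z$ for $n\le 18$'', depend only on $k\bmod M$ with $M=\operatorname{lcm}_{p}\lambda(p^{c(p)})$. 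By the Chinese Remainder Theorem it suffices to treat each $p$ in turn, and the finitely many small $k$ are checked directly; this reduces ``$g_{k,n}\in\Z$ for all $k\ge 2$ and $0\le n\le 18$'', equivalently $N_k\ge 19$ for all $k$, to a manageable finite computation.

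Granting $N_k\ge 19$, we have $N_k=19\iff g_{k,19}\notin\Z\iff 19\nmid s_{k,19}$, using $s_{k,19}\in\Z$. Every division performed to obtain $g_{k,0},\dots,g_{k,18}$ is by an integer $\le 18$, hence a unit modulo $19$, so by Fermat's little theorem (the case $p=19$, $c(19)=1$, $\lambda(19)=18$ of the previous paragraph) the residues $g_{k,j}\bmod 19$ for $j\le 18$, and therefore $s_{k,19}=1+\sum_{j=0}^{18}g_{k,j}^{k}\bmod 19$, depend only on $k\bmod 18$. Evaluating over the $18$ residue classes one finds that $19\mid s_{k,19}$ except exactly when $k\equiv 6,14\pmod{18}$, which is the claimed equivalence; together with $N_k\ge 19$ for every $k$ and, for instance, the resulting value $N_6=19$, this gives $\min_{k\ge 2}N_k=19$.

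The substantive part is the bookkeeping in the second paragraph: one has to bound the $p$-adic precision lost through the chain of divisions by $2,3,\dots,18$, confirm that the behaviour is genuinely periodic in $k$ once $k$ is moderately large (treating separately the indices $j$ at which $g_{k,j}$ is divisible by $p$, and the small values of $k$), and pin down $M$ explicitly enough that the final verification is feasible. Once this is done, everything else — in particular the computation modulo $19$ — is routine.
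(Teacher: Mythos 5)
Your proposal is correct and follows essentially the same route as the paper: the same reformulated recursion, reduction to one prime at a time in $\mathbb{Z}_{(p)}$ with precision $\nu_p(18!)$ degrading through the divisions, periodicity in $k$ via Euler/Fermat to cut down to finitely many residue classes, and a concluding finite computation (here deferred to a computer, as in the paper) both for $n\le 18$ and for the mod-$19$ classification $k\equiv 6,14\pmod{18}$. The only notable difference is your use of the Carmichael function $\lambda(p^{c(p)})$ in place of $\varphi(p^{c(p)})$, which slightly shortens the list of $k$ to check but changes nothing structurally.
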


\section{Proof}

\subsection{Settings}

The rough idea of the first half of our proof is in line with that for the $2$-G\"{o}bel sequence explained in~\cite{KobayashiSeki2023}. To prove \cref{main}, we first transform the recursive formula defining the $k$-G\"{o}bel sequence into an alternative expression for simpler calculations.

\begin{lemma}\label{rec-k-goebel}
	For any $n \geq 2$, we have
	\[
		n g_{k,n} = g_{k,n-1} (n-1 + g_{k,n-1}^{k-1})
	\]
	with $g_{k,1} = 2$.
\end{lemma}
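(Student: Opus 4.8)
The plan is to obtain the recursion directly from the definition by taking a one-step difference. First I would record the defining identity in cleared-denominator form,
\[
	n g_{k,n} = 1 + \sum_{j=0}^{n-1} g_{k,j}^k \qquad (n \geq 1),
\]
together with the same identity with $n$ replaced by $n-1$, which is legitimate precisely because $n \geq 2$ guarantees $n-1 \geq 1$:
\[
	(n-1) g_{k,n-1} = 1 + \sum_{j=0}^{n-2} g_{k,j}^k.
\]

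Subtracting the second relation from the first cancels the constant $1$ and all of the common terms $g_{k,0}^k, \dots, g_{k,n-2}^k$, leaving only the top summand:
\[
	n g_{k,n} - (n-1) g_{k,n-1} = g_{k,n-1}^k.
\]
I would then move $(n-1)g_{k,n-1}$ to the right-hand side and factor out $g_{k,n-1}$, which gives
\[
	n g_{k,n} = (n-1) g_{k,n-1} + g_{k,n-1}^k = g_{k,n-1}\bigl(n-1 + g_{k,n-1}^{k-1}\bigr),
\]
the asserted formula. Finally, the stated initial value is just the case $n = 1$ of the definition: $g_{k,1} = \tfrac{1}{1}(1 + g_{k,0}^k) = 1 + 1^k = 2$.

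There is essentially no obstacle; this is a routine telescoping/subtraction argument. The only point worth flagging is that invoking the defining identity at index $n-1$ requires $n-1 \geq 1$, i.e.\ $n \geq 2$, which is exactly the hypothesis of the lemma, so the case $n=1$ has to be handled separately and is covered by the explicit value $g_{k,1}=2$.
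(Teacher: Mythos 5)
Your proof is correct and matches the paper's argument in substance: both use the defining identity at indices $n$ and $n-1$ to isolate the top summand $g_{k,n-1}^k$ and then factor out $g_{k,n-1}$. The paper phrases it as a substitution rather than a subtraction, but this is the same computation.
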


\begin{proof}
	By definition, we see that
	\begin{align*}
		n g_{k,n} = 1 + \sum_{j=0}^{n-1} g_{k,j}^k = (n-1) g_{k,n-1} + g_{k,n-1}^k,
	\end{align*}
	which implies the result.
\end{proof}

To prove the integrality property $g_{k,n} \in \mathbb{Z}$ for any $k \geq 2$ and $1 \leq n \leq 18$, it is enough to show that $g_{k,n} \in \mathbb{Z}_{(p)}$ for any prime number $p$ since we have
\[
	\bigcap_{p \in \mathcal{P}} \mathbb{Z}_{(p)} = \mathbb{Z},
\]
where $\mathcal{P}$ is the set of all prime numbers, and $\mathbb{Z}_{(p)}$ is the localization of $\mathbb{Z}$ at $(p)$ defined by $\mathbb{Z}_{(p)} = \left\{a/b \in \mathbb{Q} : p \nmid b\right\}$. For $p \geq 19$, by \cref{rec-k-goebel}, we obviously see that $g_{k,n} \in \mathbb{Z}_{(p)}$ for any $k \geq 2$ and $1 \leq n \leq 18$. As for the remaining primes $2 \leq p \leq 17$, we need some calculations. We begin by explaining our strategy by using examples.

\begin{example}
When $p = 17$ and $k=2$, by repeatedly applying \cref{rec-k-goebel}, we can verify that $g_{2,n} \in \mathbb{Z}_{(17)}$ for all $1 \leq n \leq 16$ and $g_{2,16} \equiv 1 \pmod{17}$ in $\mathbb{Z}_{(17)}$. Thus we have $17 g_{2,17} = g_{2,16}(16 + g_{2,16}) \equiv 0 \pmod{17}$, which implies $g_{2,17} \in \mathbb{Z}_{(17)}$. We also have $g_{2,18} \in \mathbb{Z}_{(17)}$. 
\end{example}

\begin{example}\label{example-p7}
When $p=7$ and $k=2$, the first step is to confirm that $g_{2,6} \equiv 7 \pmod{7^2}$ by considering congruences modulo higher powers (corresponding to the largest power of $p$ that divides $18!$). Then, the congruence $7g_{2,7} = g_{2,6}(6+g_{2,6}) \equiv 7 \cdot 13 \pmod{7^2}$ implies that $g_{2,7} \equiv 6 \pmod{7}$ in $\mathbb{Z}_{(7)}$. Subsequently, we have $g_{2,13} \equiv 1 \pmod{7}$ and $14 g_{2,14} = g_{2,13}(13+g_{2,13}) \equiv 0 \pmod{7}$, which implies $g_{2,14} \in \mathbb{Z}_{(7)}$. Here is the list of all the calculations we have done.
\begin{align*}
	g_{2,1} &\equiv 2 \pmod{7^2}, & g_{2,2} &\equiv 3 \pmod{7^2}, & g_{2,3} &\equiv 5 \pmod{7^2}, & g_{2,4} &\equiv 10 \pmod{7^2},\\
	g_{2,5} &\equiv 28 \pmod{7^2}, & g_{2,6} &\equiv 7 \pmod{7^2}, & g_{2,7} &\equiv 6 \pmod{7^1}, & g_{2,8} &\equiv 1 \pmod{7^1},\\
	g_{2,9} &\equiv 1 \pmod{7^1}, & g_{2,10} &\equiv 1 \pmod{7^1}, & g_{2,11} &\equiv 1 \pmod{7^1}, & g_{2,12} &\equiv 1 \pmod{7^1},\\
	g_{2,13} &\equiv 1 \pmod{7^1}, & g_{2,14} &\equiv 0 \pmod{7^0}, & g_{2,15} &\equiv 0 \pmod{7^0}, & g_{2,16} &\equiv 0 \pmod{7^0},\\
	g_{2,17} &\equiv 0 \pmod{7^0}, & g_{2,18} &\equiv 0 \pmod{7^0}.
\end{align*}
In particular, we have $g_{2,n} \in \mathbb{Z}_{(7)}$ for all $1 \leq n \leq 18$. We remark that continuing this calculation does not show $g_{2,21} \in \mathbb{Z}_{(7)}$. To show it, we have to consider congruences modulo $7^3$.
\end{example}

\begin{example}\label{example-k6p19}
	When $p=19$ and $k=6$, by the same argument, we have $g_{6,n} \in \Z_{(19)}$ for all $1 \leq n \leq 18$ and $g_{6,18} \equiv 16 \pmod{19}$. Thus we have $19 g_{6,19} = g_{6,18}(18 + g_{6,18}^5) \equiv 10 \pmod{19}$, which implies $g_{6,19} \notin \Z_{(19)}$.
\end{example}

Let $\nu_p(n)$ be the exponent of the largest power of $p$ that divides $n$. To compute the general cases by using Mathematica, we rephrase \cref{rec-k-goebel} and the above examples by introducing the following sequences.

\begin{definition}
Let $k \geq 2, r \geq 1$ be integers, and $p$ a prime. For any positive integer $n$ with $\nu_p(n!) \leq r$, we define $g_{k,n,p,r} \in \mathbb{Z}/p^{r-\nu_p(n!)}\mathbb{Z} \cup \{\mathsf{F}\}$ by the initial value $g_{k,1,p,r} = 2 \bmod{p^r} \in \mathbb{Z}/p^r \mathbb{Z}$ and the following recursion: When $g_{k,n-1,p,r} = a \bmod{p^b}$ with $b = r-\nu_p((n-1)!)$, we define
\begin{align*}
	g_{k,n,p,r} = \begin{cases}
		\dfrac{a(n-1+a^{k-1})}{p^{\nu_p(n)}} (n/p^{\nu_p(n)})^{-1} \bmod{p^{b - \nu_p(n)}} &\text{if } b-\nu_p(n) > 0,\\
		0 \bmod{p^0} &\text{if } b-\nu_p(n) = 0
	\end{cases}
\end{align*}
if $a(n-1 + a^{k-1}) \equiv 0 \pmod{p^{\nu_p(n)}}$ and $g_{k,n,p,r} = \mathsf{F}$ if otherwise, where $(n/p^{\nu_p(n)})^{-1}$ is the inverse in $(\mathbb{Z}/p^{b-{\nu_p(n)}}\mathbb{Z})^\times$. When $g_{k,n-1,p,r} = \mathsf{F}$, we define $g_{k,n,p,r} = \mathsf{F}$.
\end{definition}

We easily see that if $g_{k,n,p,r} \neq \mathsf{F}$, then $g_{k,n} \in \Z_{(p)}$ and $g_{k,n,p,r} = g_{k,n} \bmod{p^{r-\nu_p(n!)}}$ in $\mathbb{Z}_{(p)}/p^{r-\nu_p(n!)}\mathbb{Z}_{(p)} \cong \mathbb{Z}/p^{r-\nu_p(n!)}\mathbb{Z}$. Moreover, if $g_{k,n-1,p,r} \neq \mathsf{F}$ and $g_{k,n,p,r} = \mathsf{F}$, then $g_{k,n} \notin \mathbb{Z}_{(p)}$ holds. The symbol ``$\mathsf{F}$" is an abbreviation for ``false". Since $g_{k,n,p,r} \neq \mathsf{F}$ implies that $g_{k,m,p,r} \neq \mathsf{F}$ for all $1 \leq m \leq n$, all we have to do is to check that $g_{k,18,p,\nu_p(18!)} \neq \mathsf{F}$ for all $k \geq 2$ and primes $2 \leq p \leq 17$. The above examples will be explained again in terms of the $g_{k,n,p,r}$ in \cref{gknpr-ex} below.

\subsection{Reduction to the finite number of $k$'s}

By focusing on the periodicity of $g_{k,n,p,r}$ for $k$, we show that it is sufficient to check the above calculations for finite cases.

\begin{lemma}\label{Euler-theorem}
	Let $r \geq 1$ be a positive integer and $p$ a prime. We assume that integers $k,\ell$ satisfy $r+1 \leq k \leq \ell$ and $k \equiv \ell \pmod{\varphi(p^r)}$, where $\varphi(n)$ is the Euler totient function. Then, for any integers $a, b \in \mathbb{Z}$ with $0 \leq b \leq r$, we have
	\[
		a^{k-1} \equiv a^{\ell-1} \pmod{p^b}.
	\]
\end{lemma}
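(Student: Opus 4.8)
The plan is to split the argument according to whether $p \mid a$ or not, which is the natural dichotomy for statements about powers of residues modulo prime powers. First I would dispose of the case $p \nmid a$: here $a$ is a unit in $\mathbb{Z}/p^r\mathbb{Z}$, so by Euler's theorem $a^{\varphi(p^r)} \equiv 1 \pmod{p^r}$, hence $a^{k-1} \equiv a^{\ell-1} \pmod{p^r}$ whenever $k \equiv \ell \pmod{\varphi(p^r)}$ and $k,\ell \geq 1$. Since $b \leq r$, reducing modulo $p^b$ gives the claim. Note this half does not even use the hypothesis $k \geq r+1$.

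The case $p \mid a$ is where that hypothesis enters. Here I would argue that both sides vanish modulo $p^b$. Indeed, if $p \mid a$ then $\nu_p(a^{k-1}) \geq k-1 \geq r \geq b$ (using $k \geq r+1$), so $a^{k-1} \equiv 0 \pmod{p^b}$, and likewise $a^{\ell-1} \equiv 0 \pmod{p^b}$ since $\ell \geq k \geq r+1$. Thus $a^{k-1} \equiv 0 \equiv a^{\ell-1} \pmod{p^b}$. Combining the two cases completes the proof. The only edge case to check is $a = 0$, which falls under $p \mid a$ and is handled identically (or trivially, since then $a^{k-1} = 0 = a^{\ell-1}$ as integers, using $k,\ell \geq 2$).

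There is essentially no obstacle here; the lemma is elementary once one sees the $p \mid a$ versus $p \nmid a$ split and notices that the bound $k \geq r+1$ is precisely what is needed to force $p^{k-1}$ (and a fortiori $p^{\ell-1}$) to be divisible by $p^b$ for all $b \leq r$. The mild subtlety worth a sentence in the write-up is that one wants the congruence to hold \emph{for all} $b$ with $0 \leq b \leq r$ simultaneously with a single hypothesis on $k$ and $\ell$; this works because the unit case gives the congruence modulo the full $p^r$ (the strongest modulus), and the non-unit case gives it modulo $p^r$ as well (both sides being $0$), so restricting to any $p^b$ with $b \leq r$ is automatic. The point of formulating it with a general $b$ is that in the intended application the relevant modulus is $p^{r - \nu_p(n!)} \leq p^r$, matching the $g_{k,n,p,r}$ construction.
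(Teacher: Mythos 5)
Your proposal is correct and follows essentially the same route as the paper: the authors also split on $p \mid a$ versus $p \nmid a$, writing $a^{\ell-1}-a^{k-1}=a^{k-1}(a^{\ell-k}-1)$ and using Euler's theorem in the unit case and $k-1\geq r$ in the non-unit case. The only difference is presentational (the paper compresses both cases into one factored identity).
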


\begin{proof}
	By Euler's theorem, we have
	\[
		a^{\ell-1} - a^{k-1} = a^{k-1}(a^{\ell-k}-1) \equiv 0 \pmod{p^r}
	\]
	whether $p \nmid a$ or $p \mid a$.
\end{proof}

\begin{proposition}\label{periodic}
	Under the same assumptions as in \cref{Euler-theorem}, for any integer $n$ satisfying $\nu_p(n!) \leq r$, we have $g_{k,n,p,r} = g_{\ell, n, p, r}$.
\end{proposition}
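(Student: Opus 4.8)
The plan is to prove \Cref{periodic} by induction on $n$, tracking how the recursion defining $g_{k,n,p,r}$ depends on $k$ only through the quantities $a^{k-1}$ that appear. For the base case $n=1$, both $g_{k,1,p,r}$ and $g_{\ell,1,p,r}$ equal $2 \bmod p^r$ by definition, so there is nothing to check. For the inductive step, suppose $\nu_p(n!) \le r$ and $g_{k,n-1,p,r} = g_{\ell,n-1,p,r}$. First I would handle the degenerate case: if this common value is $\mathsf{F}$, then both $g_{k,n,p,r}$ and $g_{\ell,n,p,r}$ are $\mathsf{F}$ by definition, and we are done. So assume $g_{k,n-1,p,r} = a \bmod p^b$ with $b = r - \nu_p((n-1)!)$; note $b \le r$, and since $\nu_p(n!) \le r$ we also have $\nu_p(n) \le b$, so the recursion applies.

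The key observation is that the next term is determined by the expression $a(n-1 + a^{k-1})$, and \Cref{Euler-theorem} gives $a^{k-1} \equiv a^{\ell-1} \pmod{p^b}$ since $0 \le b \le r$ and $k \equiv \ell \pmod{\varphi(p^r)}$ with $r+1 \le k \le \ell$. Hence $a(n-1 + a^{k-1}) \equiv a(n-1 + a^{\ell-1}) \pmod{p^b}$, and in particular these two integers have the same residue modulo $p^{\nu_p(n)}$ (as $\nu_p(n) \le b$). Therefore the case distinction in the definition — whether $a(n-1+a^{k-1}) \equiv 0 \pmod{p^{\nu_p(n)}}$ — comes out the same way for $k$ and $\ell$: either both produce $\mathsf{F}$, or neither does. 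In the $\mathsf{F}$ case we are done; in the non-$\mathsf{F}$ case, when $b - \nu_p(n) = 0$ both values are $0 \bmod p^0$, and when $b - \nu_p(n) > 0$ both values are computed by the same formula $\frac{a(n-1+a^{k-1})}{p^{\nu_p(n)}} \cdot (n/p^{\nu_p(n)})^{-1} \bmod p^{b-\nu_p(n)}$, where now I would note that dividing the congruence $a(n-1+a^{k-1}) \equiv a(n-1+a^{\ell-1}) \pmod{p^b}$ through by $p^{\nu_p(n)}$ yields agreement modulo $p^{b-\nu_p(n)}$, so the two resulting residues coincide. This closes the induction.

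The main point requiring care — though it is not deep — is the bookkeeping on $p$-adic valuations: one must check that the exponent $b = r - \nu_p((n-1)!)$ attached to $g_{k,n-1,p,r}$ satisfies $b \le r$ so that \Cref{Euler-theorem} is applicable, and that $\nu_p(n) \le b$ (equivalently $\nu_p(n!) = \nu_p(n) + \nu_p((n-1)!) \le r$, which is exactly our hypothesis on $n$) so that the recursion step is well-defined and the division by $p^{\nu_p(n)}$ preserves enough precision. Once these inequalities are in place, the argument is a direct substitution of $a^{\ell-1}$ for $a^{k-1}$ throughout the recursion, propagated by the induction hypothesis. I expect no genuine obstacle here; the proposition is essentially the statement that the recursion ``factors through'' the residue $a^{k-1} \bmod p^b$, together with \Cref{Euler-theorem}.
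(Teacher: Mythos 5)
Your proof is correct and follows exactly the route the paper intends: the paper's own proof is the one-line remark that the claim ``follows from the induction on $n$ and \cref{Euler-theorem}'', and your write-up simply supplies the details of that induction, including the valuation bookkeeping $\nu_p(n) \le b \le r$ that makes \cref{Euler-theorem} applicable at each step. No issues.
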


\begin{proof}
	It follows from the induction on $n$ and \cref{Euler-theorem}.
\end{proof}

Therefore, we have to check that $g_{k,18,p,r} \neq \mathsf{F}$ for primes $2 \leq p \leq 17$ and $2 \leq k \leq \varphi(p^r)+r$ with $r = \nu_p(18!)$. Finally, we will check them by using Mathematica.

\subsection{Mathematica}

The following is a code for computing $k$-G\"{o}bel sequence $g_{k,n,p,r}$.

\begin{screen}
\begin{verbatim}
nu[p_, n_] := FirstCase[FactorInteger[n], {p, r_} -> r, 0];
inv[n_, M_] := If[M == 1, 1, ModularInverse[n, M]];

g[k_, 1, p_, r_] := {2, r};
g[k_, n_, p_, r_] := 
  g[k, n, p, r] = 
   Module[{a, b}, 
    If[g[k, n - 1, p, r] === "F", "F", {a, b} = g[k, n - 1, p, r];
     If[Mod[a (n - 1 + a^(k - 1)), p^nu[p, n]] != 0, 
      "F", {Mod[
        a (n - 1 + a^(k - 1))/p^nu[p, n] inv[n/p^nu[p, n], 
          p^(b - nu[p, n])], p^(b - nu[p, n])], b - nu[p, n]}]]];
\end{verbatim}
\end{screen}

\begin{example}\label{gknpr-ex}
	We show the lists of $(g_{2,n,7,2})_{n=1}^{18}$ and $(g_{6,n,19,1})_{n=1}^{19}$.
	\begin{table}[H]
	\centering
	\begin{tabular}{c|cccccccccccccccccc}
		\hline
		$n$ & 1 & 2 & 3 & 4 & 5 & 6 & 7 & 8 & 9 & 10 \\
		$g_{2,n,7,2} $ & \{2,2\} & \{3,2\} & \{5,2\} & \{10,2\} & \{28,2\} & \{7,2\} & \{6,1\} & \{1,1\} & \{1,1\} & \{1,1\} \\
		\hline
		$n$ & 11 & 12 & 13 & 14 & 15 & 16 & 17 & 18 \\
		$g_{2,n,7,2} $ & \{1,1\} & \{1,1\} & \{1,1\} & \{0,0\} & \{0,0\} & \{0,0\} & \{0,0\} & \{0,0\} \\
		\hline
	\end{tabular}
	\end{table}
	\begin{table}[H]
	\centering
	\begin{tabular}{c|cccccccccccccccccc}
		\hline
		$n$ & 1 & 2 & 3 & 4 & 5 & 6 & 7 & 8 & 9 & 10 \\
		$g_{6,n,19,1} $ & \{2,1\} & \{14,1\} & \{18,1\} & \{9,1\} & \{17,1\} & \{9,1\} & \{12,1\} & \{13,1\} & \{17,1\} & \{16,1\} \\
		\hline
		$n$ & 11 & 12 & 13 & 14 & 15 & 16 & 17 & 18 & 19\\
		$g_{6,n,19,1} $ & \{10,1\} & \{18,1\} & \{5,1\} & \{16,1\} & \{4,1\} & \{8,1\} & \{2,1\} & \{16,1\} & $\mathsf{F}$\\
		\hline
	\end{tabular}
	\end{table}
	The output $\{a, b\}$ for $n$ means that $g_{k,n,p,r} = a \bmod{p^b}$, that is, $g_{k,n} \equiv a \pmod{p^b}$ in $\mathbb{Z}_{(p)}$. The results coincide with those in \cref{example-p7} and \cref{example-k6p19}.
\end{example}



The resulting outputs of $g_{k,18,p,\nu_p(18!)}$ for all cases we have to check are $\{0,0\}$. In other words, as desired, we have successfully confirmed that $g_{k,18,p,\nu_p(18!)} \neq \mathsf{F}$ for primes $2 \leq p \leq 17$ and integers $2 \leq k \leq \varphi(p^{\nu_p(18!)}) + \nu_p(18!)$. This concludes the proof of the first half of \cref{main}.

	
	
	
	
	
	

Furthermore, we also have the following table for $p=19$, which immediately implies the second half of \cref{main}, that is, $N_k = 19$ if and only if $k \equiv 6, 14 \pmod{18}$.
\begin{table}[H]
\centering
\begin{tabular}{c|cccccccccccccccccc}
	\hline
	$k$ & 2 & 3 & 4 & 5 & 6 & 7 & 8 & 9 & 10 \\
	$g_{k,19,19,1} $ & \{0,0\} & \{0,0\} & \{0,0\} & \{0,0\} & $\mathsf{F}$ & \{0,0\} & \{0,0\} & \{0,0\} & \{0,0\} \\
	\hline
	$k$ & 11 & 12 & 13 & 14 & 15 & 16 & 17 & 18 & 19\\
	$g_{k,19,19,1} $ & \{0,0\} & \{0,0\} & \{0,0\} & $\mathsf{F}$ & \{0,0\} & \{0,0\} & \{0,0\} & \{0,0\} & \{0,0\}\\
	\hline
\end{tabular}
\end{table}

\section{Concluding remarks}

Let $N$ be the set of all $N_k$ given by
\begin{align*}
	N = \{N_k \in \mathbb{Z}_{>0} \cup \{\infty\} : k \geq 2\}.
\end{align*}
Determining the set $N$ is a fundamental question requiring further investigation. Our \cref{main} ensures that the minimum of $N$ is $19$, and \cref{Nk-list} suggests that
\[
	N = \{19, 23, 29, 31, 37, 43, \dots\}.
\]
Upon observing its initial terms, only prime numbers may seem to occur. However, for example, $N_5 = 214$ is a composite number, and by using the same approach as in our proof, it can be confirmed that the prime number $41$ is not included in $N$. As discussed in~\cite{KobayashiSeki2023}, it is not known whether $N_k$ always takes a finite value for any $k \geq 2$, and whether $\sup N$ is infinite. 

Zagier~\cite[Day 5, Problem 3]{Zagier1996} addressed the ($2$-)G\"{o}bel sequence and provided an asymptotic formula for $g_n$, (see also~\cite[6.10]{Finch2003} and \cite{Weisstein}). He also discussed heuristics suggesting that 
$g_p \in \mathbb{Z}_{(p)}$ often holds. 

We introduce another approach by Ibstedt~\cite{Ibstedt1990}. Specifically, a new sequence can be obtained by adopting the recursion in \cref{rec-k-goebel} as the definition of $k$-G\"{o}bel sequences and varying the initial value choice of $g_{k,1} = 2$. Consequently, $N_k$ can be defined similarly. For instance, the $2$-G\"{o}bel sequence with the initial value $g'_{2,1} = 3$ is given by
\[
	(g'_{2,n})_n = \left(3, 6, 16, 76, 1216, 247456, \frac{61235956672}{7}, \dots \right).
\]
This example illustrates that altering the initial value can cause the loss of integrality property earlier than in the original $k$-G\"{o}bel sequence. In these cases as well, similar problems are likely to be considered.

\section*{Acknowledgements}

The authors would like to express their gratitude to Neil Sloane for providing background information on Fritz G\"{o}bel and to Masanobu Kaneko for sharing a copy of Zagier's note~\cite{Zagier1996} and offering helpful comments. Additionally, they extend their thanks to Shin-ichiro Seki for inspiring this study and providing valuable comments. Special acknowledgements are also due to Doom Kobayashi and Akira Iino for granting permission to use the image in \cref{Figure}. The second author was supported by JSPS KAKENHI Grant Numbers JP20K14292 and JP21K18141.

\bibliographystyle{amsplain}
\bibliography{References}

\end{document}